\newtheorem{thm}{Theorem}[section]
\newtheorem{lemma}[thm]{Lemma}
\newtheorem{de}[thm]{Definition}
\numberwithin{equation}{section}
\begin{document}
	\title{Non-spectrality of Moran measures with consecutive digits}
	
	\author{Ya-Li Zheng, Wen-Hui Ai$^{\mathbf{*}}$}

	\address{Key Laboratory of High Performance Computing and Stochastic Information Processing (Ministry of Education of China), School of Mathematics and Statistics, Hunan Normal University, Changsha, Hunan 410081, China}
	
	\email{zheng92542021@163.com}
	\email{awhxyz123@163.com}
	
	\date{\today}
	
	\keywords{Non-spectral; Moran measure; Infinite orthogonal.}
	
	\subjclass[2010]{Primary 28A80; Secondary 42C05, 46C05.}
	\thanks{The research is supported in part by the NNSF of China (No.11831007).}
	\thanks{$^{\mathbf{*}}$Corresponding author}
	\maketitle \vskip 0.05in
	
	%{\bf Abstract} \ \
	\begin{abstract}
		 Let $\rho=(\frac{p}{q})^{\frac{1}{r}}<1$ for some $p,q,r\in\mathbb{N}$ with $(p,q)=1$ and $\mathcal{D}_{n}=\{0,1,\cdot\cdot\cdot,N_{n}-1\}$, where $N_{n}$ is prime for all $n\in\mathbb{N}$, and denote $M=\sup\{N_{n}:n=1,2,3,\ldots\}<\infty$. The associated Borel probability measure
$$\mu_{\rho,\{\mathcal{D}_{n}\}}=\delta_{\rho\mathcal{D}_{1}}*\delta_{\rho^{2}\mathcal{D}_{2}}*\delta_{\rho^{3}\mathcal{D}_{3}}*\cdots$$
is called a Moran measure. Recently, Deng and Li proved that $\mu_{\rho,\{\mathcal{D}_{n}\}}$ is a spectral measure if and only if $\frac{1}{N_{n}\rho}$ is an integer for all $n\geq 2$.
In this paper, we prove that if $L^{2}(\mu_{\rho, \{\mathcal{D}_{n}\}})$ contains an infinite orthogonal exponential set, then there exist infinite positive integers $n_{l}$ such that $(q,N_{n_{l}})>1$. Contrastly, if $(q,N_{n})=1$ and $(p,N_{n})=1$ for all $n\in\mathbb{N}$, then there are at most $M$ mutually orthogonal exponential functions in $L^{2}(\mu_{\rho, \{\mathcal{D}_{n}\}})$ and $M$ is the best possible. If $(q,N_{n})=1$ and $(p,N_{n})>1$ for all $n\in\mathbb{N}$, then there are any number of orthogonal exponential functions in $L^{2}(\mu_{\rho, \{\mathcal{D}_{n}\}})$.
		
	\end{abstract}
	
	%\tableofcontents
	
	\section{Introduction}
	\
	
	A set $\Lambda$ in $\mathbb{R}^{n}$ is called a spectrum for a Borel probability measure $\mu$ on $\mathbb{R}^{n}$ with compact support if the corresponding exponential functions $E_\Lambda=\{e^{2\pi i\left\langle\lambda,x\right \rangle}:\lambda\in\Lambda\}$ forms an orthonormal basis for the Hilbert space $L^{2}(\mu)$. In this case, $\mu$ is called a spectral measure. Particularly, if $\mu$ is the normalized Lebesgue measure restricted on a subset
	$\Omega\subset\mathbb{R}$, then $\Omega$ is called a spectral set.

	In \cite{F}, Fuglede proposed the following famous conjecture: $\Omega$ is a spectral set if and only if $\Omega$ is a translational tile. Although the conjecture was disproved eventually (see \cite{KM} and \cite{T}) for the case that $\Omega\subset\mathbb{R}^{n}$ with $n\geq3$, it is still an open problem in $\mathbb{R}$ and $\mathbb{R}^{2}$. The conjecture has led to the development of the spectrality of probability measures. The first surprising discovery comes from \cite{JP}, where Jorgensen and Pedersen proved that the standard middle-fourth Cantor measure is a singular, non-atomic, spectral measure. They proved that the Cantor measure $\mu_{\rho}$ is a spectral measure if $\rho=\frac{1}{2k}$. For more general Bernoulli convolution  $\mu_{\rho}$, Hu and Lau \cite{HL} proved that the necessary and sufficient condition that $L^{2}(\mu_{\rho})$ contains an infinite exponential orthonormal set is $\rho=(\frac{p}{q})^{\frac{1}{n}}$ , where $n\geq1$, $p$ is odd and $q$ is even. Dai \cite{D1} completely settled the problem that the only spectral Bernoulli convolutions are of the contraction ratio $\frac{1}{2k}$. Furthermore, Dai, He and Lai \cite{DHL} showed that the $N$-Bernoulli convolution $\mu_{N}$ is a spectral measure if and only if $N|\rho^{-1}$. From then on, many other self-similar/self-affine/Moran spectral measures have been discovered, see \cite{AH14,AHL15,DHL19,AFL19,LDL22} and so on. However, a non-spectral measure $\mu$ belongs to one of the following three classes:
\begin{itemize}
\item  There are at most a finite number of orthogonal exponentials in $L^{2}(\mu)$.
\item  All the cardinality of orthogonal exponentials in $L^{2}(\mu)$ are bounded, but not uniform bounded.
\item  There exists an infinite set of orthogonal exponentials but none of such sets forms a basis for $L^{2}(\mu)$.
\end{itemize}
Deng et al. \cite{D2, WWDZ, WDJ} also gave some sufficient and necessary conditions under which that $L^{2}(\mu_{N})$ contains an infinite orthogonal exponential set.

	Moran-type measures are firstly raised by Strichartz \cite{SR} in 2000, who studied the following measure with the product structure of infinite convolution:
	\begin{equation*}
	\mu_{\{M_{k},\mathcal{D}_{k}\}}=\delta_{M_{1}^{-1}\mathcal{D}_{1}}*\delta_{M_{1}^{-1}M_{2}^{-1}\mathcal{D}_{2}}*\cdots,
	\end{equation*}
	where $\{M_{k}\}\subset M_{n}(\mathbb{R})$ is a sequence of expanding real matrices (all the eigenvalues of $M_{k}$ have moduli $>1$) and $\{\mathcal{D}_{k}\}\subset\mathbb{Z}^{n}$ is a sequence of finite
	digit sets with cardinality $\#\mathcal{D}_{k}$. In this case, $\delta_{E}:=\frac{1}{\#E}\sum_{e\in E}\delta_{e}$, $\#E$ is the cardinality of a set $E$, $\delta_{e}$ is the Dirac measure at $e$ and the convergence is in the weak sense.

In 2014, An and He \cite{AH14} studied the spectrality of Moran measure
	$\mu_{\{b_{k}, \mathcal{D}_{k}\}}=\delta_{{b_{1}}^{-1}\mathcal{D}_{1}}* \delta_{({b_{1}}{b_{2}})^{-1} D_{2}}*\cdots$ with consecutive digits $\mathcal{D}_{k}=\{0, 1, \dots, q_{k}-1\}$. They proved that $\mu_{\{b_{k},\mathcal{D}_{k}\}}$ is a spectral Moran measure if $q_k|b_k$. Recently, Deng and Li \cite{DL} obtained that $\mu_{\{b_{k},\mathcal{D}_{k}\}}$ is a spectral measure if and only if $q_k|b_k$ for all $k\geq 2$. However, there are a few results about the non-spectrality of Moran measures except the paper \cite{LW}. Li and Wu \cite{LW} considered the non-spectrality of Moran measures with two element digits. In this paper, we are focused on the non-spectrality of Moran measures with consecutive digits.
	
%For the non-spectrality of self-affine measures, lots of results are given \cite{LJL1, LJL2, LDZ}.
	
	 Let $\rho=(\frac{p}{q})^{\frac{1}{r}}<1$ for some $p,q,r\in\mathbb{N}$ with $(p,q)=1$ and $\mathcal{D}_{n}=\{0,1,2,\ldots,N_{n}-1\}$,
	 where $N_{n}$ is prime number for all $n\in\mathbb{N}$, and
	 \begin{equation}\label{1.2}
	M=\sup\{N_{n}:n=1,2,3,\ldots\}<\infty.
	\end{equation}
By Hutchinson \cite{Hut} and Strichartz \cite{SR}, there is a unique Moran measure
\begin{equation}\label{1.1}
	\mu_{\rho,\{\mathcal{D}_{n}\}}=\delta_{\rho\mathcal{D}_{1}}*\delta_{\rho^{2}\mathcal{D}_{2}}*\delta_{\rho^{3}\mathcal{D}_{3}}*\cdots.
	\end{equation}
We will always have the above assumptions in the following statement when there is no confusion. Throughout the paper, we make the convention that all fractions have the simplest form, that is for a fraction $\frac{p}{q}$  we have $\gcd(p,q)=1$. And $r$ is the smallest integer such that $(\frac{p}{q})^{\frac{1}{r}}\in\mathbb{Q}$ (for example, $\rho=(\frac{4}{9})^{\frac{1}{4}}=({\frac{2}{3}})^{\frac{1}{2}}$, we denote $r=2$).

In this paper, we obtain the following non-spectrality of Moran measures $\mu_{\rho,\{\mathcal{D}_{n}\}}$.		
	
\begin{thm}\label{thm1.1}
		Let $\mu_{\rho, \{\mathcal{D}_{n}\}}$ be defined by \eqref{1.1}.
		Then the following hold.

		$(\rm i)$ If $L^{2}(\mu_{\rho, \{\mathcal{D}_{n}\}})$ contains an infinite orthogonal exponential set, then there exist infinite positive integers $n_{l}$ such that $(q,N_{n_{l}})>1$.

		$(\rm ii)$  If $(q,N_{n})=1$ and $(p,N_{n})=1$ for all $n\in\mathbb{N}$, then there are at most $M$(defined by \eqref{1.2}) mutually orthogonal exponential functions in $L^{2}(\mu_{\rho,\{\mathcal{D}_{n}\}})$, and $M$ is the best possible.

		$(\rm iii)$ If $(q,N_{n})=1$ and $(p,N_{n})>1$ for all $n\in\mathbb{N}$, then there are any number of orthogonal exponential functions in $L^{2}(\mu_{\rho, \{\mathcal{D}_{n}\}})$, but cannot be infinite.
	\end{thm}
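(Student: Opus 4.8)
The plan is to reduce everything to the zero set of $\widehat{\mu_{\rho,\{\mathcal D_n\}}}$. Since two exponentials $e^{2\pi i\lambda x}$, $e^{2\pi i\lambda' x}$ are orthogonal in $L^2(\mu_{\rho,\{\mathcal D_n\}})$ exactly when $\widehat{\mu_{\rho,\{\mathcal D_n\}}}(\lambda-\lambda')=0$, I would first compute, using that each $\mathcal D_n$ is a full set of consecutive residues and that $N_n$ is prime,
\begin{equation*}
\mathcal Z:=\Big\{\xi:\widehat{\mu_{\rho,\{\mathcal D_n\}}}(\xi)=0\Big\}=\bigcup_{n\ge1}\rho^{-n}\Big(\tfrac1{N_n}\mathbb Z\setminus\mathbb Z\Big).
\end{equation*}
Consequently a set $\Lambda$ indexes mutually orthogonal exponentials iff $(\Lambda-\Lambda)\setminus\{0\}\subset\mathcal Z$, and since this condition is translation invariant I may assume $0\in\Lambda$.

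Next I would carry out an algebraic reduction that is the backbone of all three parts. Writing $n=rb+t$ with $0\le t<r$ and using $\rho^{-r}=q/p$, each point of $\mathcal Z$ has the form $(q/p)^{b}\,c\,N_n^{-1}\rho^{-t}$ and hence lies on one of the $r$ lines $\mathbb Q\rho^{-t}$. Minimality of $r$ forces $x^r-p/q$ to be irreducible, so $1,\rho^{-1},\dots,\rho^{-(r-1)}$ are linearly independent over $\mathbb Q$. Projecting onto these coordinates, if $\lambda_1\in\mathbb Q\rho^{-t_1}$ and $\lambda_2\in\mathbb Q\rho^{-t_2}$ with $t_1\ne t_2$ both lie in $\Lambda$, then $\lambda_1-\lambda_2$ has two nonzero coordinates and cannot lie on a single line $\mathbb Q\rho^{-s}$, contradicting $\lambda_1-\lambda_2\in\mathcal Z$. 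Thus all nonzero elements of $\Lambda$ lie on one line $\mathbb Q\rho^{-t^\ast}$, and the problem becomes purely rational: I must bound the size of a set $\{x_i\}\subset\mathbb Q$ with $x_0=0$ such that every difference equals $(q/p)^{b}c/N_m$ for some level $m\equiv t^\ast\pmod r$ with $N_m\nmid c$.

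On the rational side the three regimes are separated by $\ell$-adic valuations $v_\ell$. For a difference carried by level $m=t^\ast+rb$, the factor $(q/p)^{b}$ gives $v_\ell(x_i-x_j)\ge b$ for every prime $\ell\mid q$, while for the level prime $N_m$ one finds $v_{N_m}(x_i-x_j)=-1$ if $(p,N_m)=(q,N_m)=1$ and $v_{N_m}(x_i-x_j)=-b\,v_{N_m}(p)-1$ if $N_m\mid p$. For part (ii) I would use the ultrametric inequality to show that two differences cannot carry distinct level primes $P\ne P'$ (a third difference would be forced to have level prime equal to both), so a single prime $P\le M$ governs everything, $v_P(x_i)=-1$ for $i\ge1$, and reduction of $Px_i$ modulo $P$ places the $x_i$ in distinct classes of $\mathbb F_P$, giving at most $P\le M$ exponentials; the value $M$ is attained by the level $n_0$ with $N_{n_0}=M$ and the points $\rho^{-n_0}k/M$, $0\le k<M$. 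For the constructive half of part (iii) I would fix a prime $P\mid p$ occurring at infinitely many levels $\equiv t^\ast$ and build, by a $P$-adic digit construction along those levels, families of size $P^{s}$ for every $s$ — the ambient factor $(q/p)^{b}$ supplies exactly the divisibility at the primes of $q$ that $\mathcal Z$ demands, so each difference does land in a single $\mathcal Z_m$.

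For part (i) I would argue by contradiction: assume $\Lambda$ is infinite but $(q,N_n)>1$ for only finitely many $n$. After the reduction, infinitely many $x_i$ share one level prime $P\le M$ by pigeonhole, and since a single level admits at most $P$ of them, infinitely many distinct levels $m$ satisfy $N_m=P$. If $P\mid q$ this already yields infinitely many $n$ with $(q,N_n)>1$, the desired conclusion. If $P\nmid q$ I would derive a contradiction: when $P\nmid pq$ the analysis of part (ii) caps the family at $P$ points; when $P\mid p$, the crux is an interplay between $P$ and a fixed prime $\ell_0\mid q$ — a point of level $b_i$ has $v_{\ell_0}(x_i)$ finite, so it cannot pair with a point of much larger level (whose difference would be forced to level $b_j$ and hence to $v_{\ell_0}\ge b_j\,v_{\ell_0}(q)$), which bounds all levels and then, after clearing a common denominator, confines the $x_i$ to distinct classes modulo a fixed power of $P$. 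This also gives the non-infinite assertion in part (iii), where $(q,N_n)=1$ for every $n$. I expect the two-prime interplay forcing the levels to be bounded, together with the bookkeeping that collapses finitely many interacting levels to a single finite modulus, to be the main obstacle; the $\mathbb Q$-linear independence reduction and the zero-set computation are comparatively routine.
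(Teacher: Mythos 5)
Your preparatory reductions are sound and coincide with the paper's: the zero-set formula is \eqref{2.1}, the irreducibility of $qx^{r}-p$ and the resulting confinement of $\Lambda$ to a single line $\mathbb{Q}\rho^{-t^{*}}$ are exactly Lemma \ref{lem3.1} and Lemma \ref{lem3.3}, and your part (ii) — ultrametric forcing of a single level prime, then pigeonholing the unit residues of $Px_{i}$ in $\mathbb{F}_{P}$, with the extremal set $\{0\}\cup\{\rho^{-n_{0}}k/M : 1\le k\le M-1\}$ — is the paper's Lemma \ref{lem3.2} plus its proof of (ii) recast in $P$-adic language. Your sketch for the constructive half of (iii) is also fillable and close in spirit to the paper's: padding the numerators at levels $b_{1}<\cdots<b_{s}$ (all carrying one prime $P\mid p$) with high powers of $q$ makes every difference land at the level of the top differing digit, and $P\mid p$ annihilates the lower terms mod $P$; the paper's explicit family $\lambda_{n}=\frac{q^{(\alpha+n)s}}{N_{n}}(\frac{q}{p})^{ns+i/r}$, built with multiplicative orders of $q$, exploits the same mechanism.

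The genuine gap is in part (i), which is the heart of the theorem, and it comes from importing conclusions proved under uniform hypotheses into the mixed setting of your contradiction argument. Once you only assume that finitely many $n$ satisfy $(q,N_{n})>1$, the sequence $\{N_{n}\}$ may simultaneously contain primes coprime to $pq$ and primes dividing $p$; then your claim that ``when $P\nmid pq$ the analysis of part (ii) caps the family at $P$ points'' fails, because the single-prime rigidity of Lemma \ref{lem3.2} was established only under $(p,N_{n})=(q,N_{n})=1$ for \emph{all} $n$: two points with $v_{P}(x_{i})=v_{P}(x_{j})=-1$ can have $v_{P}(x_{i}-x_{j})\ge 0$, and the difference may then legitimately sit at a level whose prime divides $p$, so no cap follows. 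Likewise ``a single level admits at most $P$ of them'' presupposes that a pair with numerators congruent mod $P$ has no zero to escape to, which is precisely the analysis of down-level escapes (exact $v_{P'}$ matches for each $P'\mid p$, the finitely many levels with $(q,N_{m})>1$, cross-level congruences) that you defer as ``bookkeeping'' — that bookkeeping \emph{is} the content of (i). Your level-bounding device via a fixed prime $\ell_{0}\mid q$ is correct (if $b_{j}\to\infty$ then $v_{\ell_{0}}(x_{1}-x_{j})$ stabilizes, forcing the difference's level to stay bounded while $v_{P}(x_{1}-x_{j})\to-\infty$, a contradiction), so the program can plausibly be completed, but as written it is a plan with its hardest steps open. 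Note that the paper takes a different and shorter route that sidesteps the mixed-prime interaction entirely: it writes each numerator as $q^{l_{k}}b_{k}$ with $q\nmid b_{k}$, chooses the minimal exponent $k_{1}$ realized by a $q$-coprime difference, shows every $\lambda\in\Lambda$ collapses to the single scale $\lambda_{0}+\frac{a_{j}}{N\rho^{k_{1}}}$, and then one pigeonhole mod $N$ yields $(\frac{p}{q})^{u}=\frac{B}{m_{k}N}$ with $u>0$, which forces $(q,N)>1$ directly and, iterated, produces infinitely many such $n$ — a positive argument rather than a proof by contradiction.
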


	We organize the paper as follows. In Section 2, we state some preliminary knowledge. In Section 3, we give a necessary condition for the existence of an infinite orthogonal exponential set in $L^{2}(\mu_{\rho, \{\mathcal{D}_{n}\}})$ and prove our main results.

	\section{Preliminaries}
	This section is devoted to giving some preliminary results and some basic concepts which will be used. We define the Fourier transformation of a probability measure $\mu$ in $\mathbb{R}^{n}$ by
	\begin{equation*}
	\hat{\mu}(\xi)=\int e^{-2\pi i\left\langle\xi,x\right\rangle}d\mu(x),
	 \xi\in\mathbb{R}^{n}.
	\end{equation*}
	Let $\mu$:=$\mu_{\rho, \{\mathcal{D}_{n}\}}$ be given by \eqref{1.1}, then
	\begin{equation}\label{2.0}
	\hat{\mu}(\xi)=\hat{\mu}_{\rho, \{\mathcal{D}_{n}\}}(\xi)=\prod_{n=1}^{\infty}M_{\mathcal{D}_{n}}(\rho^{n}\xi), \xi\in\mathbb{R},
	\end{equation}
	where $M_{\mathcal{D}_{n}}(\xi)=\frac{1}{N_{n}} \sum_{j=0}^{N_{n}-1} e^{-2 \pi i j \xi}$ is the mask polynomial of $\mathcal{D}_{n}$.

	Let $\mathcal{Z}(f)=\{\xi:f(\xi)=0\}$ be the set of zeros of $f$. We obtain that
	\begin{equation}\label{2.1}
	\mathcal{Z}\left(\hat{\mu}\right)
=\bigcup_{n=1}^{\infty}\mathcal{Z}\left( M_{\mathcal{D}_{n}}(\rho^{n}\xi)\right)
= \bigcup_{n=1}^{\infty} \frac{\rho^{-n}a_{n}}{N_{n}},\ a_{n}\in\mathbb{Z}\backslash N_{n}\mathbb{Z}.
	\end{equation}

	For $\lambda_{i}\neq\lambda_{j}\in\mathbb{R}$, the orthogonal condition is
	\begin{equation*}
	0=\left\langle e^{2\pi i\left\langle\lambda_{i},x\right\rangle},e^{2\pi i\left\langle\lambda_{j},x\right\rangle}\right\rangle_{L^{2}(\mu)}=\int e^{2\pi i\left\langle\lambda_{i}-\lambda_{j},x\right\rangle}d\mu(x)=\hat{\mu}(\lambda_{i}-\lambda_{j}).
	\end{equation*}
For a countable set $\Lambda\subset\mathbb{R}$, it is easy to see that $E_{\Lambda}=\{e^{2\pi i\left\langle\lambda,x\right\rangle}:\lambda\in\Lambda\}$ is an orthonormal family of $L^{2}(\mu)$ if and only if $$(\Lambda-\Lambda)\setminus\{0\}\subset\mathcal{Z}(\hat{\mu}).$$
We say $\Lambda$ is a bi-zero set of $\mu$ if $(\Lambda-\Lambda)\setminus\{0\}\subset\mathcal{Z}(\hat{\mu})$.
$\Lambda$ is called a spectrum of $\mu$ if $E_{\Lambda}$ is an orthogonal basis for $L^{2}(\mu)$.
	
%	Then we have
%	\begin{equation}\label{2.2}
%	\Lambda \backslash\{0\} \subseteq(\Lambda-\Lambda) \backslash\{0\} \subseteq \mathcal{Z}(\hat{\mu})=\bigcup_{n=1}^{\infty} \frac{\rho^{-n}a_{n}}{N_{n}},a_{n}\in\mathbb{Z}\backslash N_{n}\mathbb{Z}.
%	\end{equation}

	To prove our results, we need the following lemmas which will play a crucial role in the proofs.
	
	\begin{lemma} (\cite[Lemma 2.5]{D2})\label{lem3.1}
		Assume that $a\in\mathbb{R}$ admits a minimal integer polynomial $qx^{r}-p(r>1)$ and satisfies $b_{1}a^{k}+b_{2}a^{j}=b_{3}a^{u}$,where $k,j,u\geq0$ are  nonnegative integers and $b_{1},b_{2},b_{3}\in\mathbb{Z}\setminus\{0\}$. Then $k\equiv j\equiv u\pmod r$.
	\end{lemma}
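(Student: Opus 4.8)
The plan is to exploit the fact that the minimality of the polynomial $qx^{r}-p$ forces $[\mathbb{Q}(a):\mathbb{Q}]=r$, so that $\{1,a,a^{2},\ldots,a^{r-1}\}$ is a $\mathbb{Q}$-basis of the number field $\mathbb{Q}(a)$. The single relation $a^{r}=p/q$ then lets me collapse every power of $a$ onto one basis vector: writing any exponent $m\ge 0$ as $m=r\lfloor m/r\rfloor+(m\bmod r)$ gives $a^{m}=(p/q)^{\lfloor m/r\rfloor}\,a^{\,m\bmod r}$, a rational multiple of $a^{\,m\bmod r}$. I would also record that $a\neq 0$ (otherwise $p=0$ and the minimal polynomial would be $x$, contradicting $r>1$), so none of these rational multiples is zero.

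First I would push the hypothesis $b_{1}a^{k}+b_{2}a^{j}=b_{3}a^{u}$ onto the basis. Setting $k'=k\bmod r$, $j'=j\bmod r$, $u'=u\bmod r$ in $\{0,1,\ldots,r-1\}$ and the nonzero rationals $c_{1}=b_{1}(p/q)^{\lfloor k/r\rfloor}$, $c_{2}=b_{2}(p/q)^{\lfloor j/r\rfloor}$, $c_{3}=b_{3}(p/q)^{\lfloor u/r\rfloor}$, the relation becomes
\begin{equation*}
c_{1}a^{k'}+c_{2}a^{j'}=c_{3}a^{u'}.
\end{equation*}
Proving $k'=j'=u'$ is then exactly the assertion $k\equiv j\equiv u\pmod r$.

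Next I would split on whether $k'=j'$. If $k'=j'$, the left side is $(c_{1}+c_{2})a^{k'}$; since the right side $c_{3}a^{u'}$ is a \emph{nonzero} element of $\mathbb{Q}(a)$, the coefficient $c_{1}+c_{2}$ cannot vanish, and matching both sides against the basis forces $k'=u'$, hence $k'=j'=u'$. If instead $k'\neq j'$, I would rewrite the relation as $c_{1}a^{k'}+c_{2}a^{j'}-c_{3}a^{u'}=0$ and eliminate every possibility by $\mathbb{Q}$-linear independence: when $u'\notin\{k',j'\}$ three distinct basis vectors carry nonzero coefficients, which is impossible; when $u'=k'$ the identity $(c_{1}-c_{3})a^{k'}+c_{2}a^{j'}=0$ forces $c_{2}=0$ because $k'\neq j'$, a contradiction, and symmetrically for $u'=j'$. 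Thus $k'\neq j'$ is untenable, returning us to the first case.

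I expect the only genuinely delicate point to be the coefficient bookkeeping in the case $k'=j'$: one must notice that although $c_{1}+c_{2}$ could a priori vanish for special choices of $b_{1},b_{2},p,q$, the equation itself forbids this, since its right-hand side is a nonzero multiple of a basis element. Everything else is a direct application of the linear independence of $\{1,a,\ldots,a^{r-1}\}$ over $\mathbb{Q}$, which is the structural fact that carries the whole argument.
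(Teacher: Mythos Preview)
Your argument is correct. The paper itself does not prove this lemma: it is quoted verbatim from \cite[Lemma~2.5]{D2} and used as a black box, so there is no in-paper proof to compare against. Your approach---reducing each exponent modulo $r$ via $a^{r}=p/q$ and then invoking the $\mathbb{Q}$-linear independence of $\{1,a,\ldots,a^{r-1}\}$---is the natural (and standard) way to establish such a statement, and the case analysis and the handling of the potentially vanishing coefficient $c_{1}+c_{2}$ are both sound.
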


	We are now in a position to prove an intresting result. The specific is as follows.
	\begin{lemma}\label{lem3.2}
		Let $\mu$ be defined by \eqref{1.1}. If $(\Lambda-\Lambda)\setminus{0}\subset\mathcal{Z}(\hat{\mu})$, then the corresponding $N_{n}$ must be equal.
	\end{lemma}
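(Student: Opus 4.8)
The plan is to reduce the whole statement to a three-point configuration and then exploit the rigidity supplied by Lemma \ref{lem3.1}. After a harmless translation I may assume $0\in\Lambda$, so it suffices to show that for any three distinct points $\lambda_1,\lambda_2,\lambda_3\in\Lambda$ the three primes attached to the zero-set representations of their pairwise differences coincide; running this over all triples then forces a single prime throughout $(\Lambda-\Lambda)\setminus\{0\}$. By \eqref{2.1} I write
\begin{equation*}
\lambda_1-\lambda_2=\frac{\rho^{-i}a}{N_i},\qquad \lambda_2-\lambda_3=\frac{\rho^{-j}b}{N_j},\qquad \lambda_1-\lambda_3=\frac{\rho^{-k}c}{N_k},
\end{equation*}
with $a\in\mathbb{Z}\setminus N_i\mathbb{Z}$, $b\in\mathbb{Z}\setminus N_j\mathbb{Z}$, $c\in\mathbb{Z}\setminus N_k\mathbb{Z}$, and I record the additive relation $(\lambda_1-\lambda_2)+(\lambda_2-\lambda_3)=\lambda_1-\lambda_3$.

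Next I would extract the exponent constraint. Multiplying this relation by $\rho^{\max\{i,j,k\}}$ and then by $N_iN_jN_k$ turns it into an integer-coefficient identity of the shape $b_1\rho^{u_1}+b_2\rho^{u_2}=b_3\rho^{u_3}$ with $u_1,u_2,u_3\ge 0$ and $b_1,b_2,b_3\in\mathbb{Z}\setminus\{0\}$. Since $\rho$ has minimal polynomial $qx^r-p$, Lemma \ref{lem3.1} (valid for $r>1$) gives $u_1\equiv u_2\equiv u_3\pmod r$, hence $i\equiv j\equiv k\pmod r$. Factoring out the common residue power of $\rho$ and substituting $\rho^{r}=p/q$ eliminates $\rho$ entirely; clearing denominators by a suitable power of $q$ then produces an integer equation
\begin{equation*}
aN_jN_k\,p^{t_1}q^{T-t_1}=bN_iN_k\,p^{t_2}q^{T-t_2}+cN_iN_j\,p^{t_3}q^{T-t_3},
\end{equation*}
where $t_1,t_2,t_3\ge 0$ and $T=\max\{t_1,t_2,t_3\}$ (signs absorbed into $b,c$).

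The core of the argument is a modular reduction of this identity against each prime. Reducing modulo $N_i$ annihilates the two right-hand terms, and since $N_i\nmid a$ it forces $N_i\mid N_jN_k\,p^{t_1}q^{T-t_1}$; the symmetric reductions modulo $N_j$ and $N_k$ give $N_j\mid N_iN_k\,p^{t_2}q^{T-t_2}$ and $N_k\mid N_iN_j\,p^{t_3}q^{T-t_3}$. In the coprime situation $(p,N_n)=(q,N_n)=1$ these collapse to $N_i\mid N_jN_k$, $N_j\mid N_iN_k$, $N_k\mid N_iN_j$, and because the $N_n$ are prime a short case check (e.g.\ $N_i\mid N_jN_k$ gives $N_i=N_j$ or $N_i=N_k$, and the remaining divisibility then pins down the third) yields $N_i=N_j=N_k$.

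The hard part will be the non-coprime case, where a prime $N_n$ may divide $p$ or $q$ (only one is possible since $(p,q)=1$), because then the divisibilities above no longer directly identify the primes. I expect to handle this by a finer denominator bookkeeping: after dividing out the common power of $\rho$ each difference becomes a rational number whose reduced denominator is controlled by $N_n$ together with its exponent $t$, and the normalization conditions $N_i\nmid a$, $N_j\nmid b$, $N_k\nmid c$ prevent the powers of $p,q$ from fully cancelling the distinct primes. Tracking which term attains the minimal and the maximal exponent should then show that a difference lying in $\mathcal{Z}(\hat\mu)$, whose reduced denominator is a single prime by \eqref{2.1}, cannot carry two distinct primes $N_i\ne N_j$, the desired contradiction. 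When $r=1$ the ratio $\rho$ is already rational, so one skips Lemma \ref{lem3.1} and the elimination step and applies the same modular reduction directly after clearing denominators.
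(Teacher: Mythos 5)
Your coprime case is sound and is essentially the paper's Case 1 in a cleaner form: the paper likewise reduces two points $\lambda_{1},\lambda_{2}\in\Lambda$ together with their difference to a three-term relation among zeros, applies Lemma \ref{lem3.1} to synchronize the exponents modulo $r$, substitutes $\rho^{r}=p/q$, and clears denominators; where you reduce the resulting integer identity symmetrically modulo each of the three primes, the paper instead derives an ``integer equals non-integer'' contradiction as in \eqref{3.4}--\eqref{3.5}. Up to that point the two arguments agree in substance, and your symmetric modular reduction with the short case check $N_{i}\mid N_{j}N_{k}$, $N_{j}\mid N_{i}N_{k}$, $N_{k}\mid N_{i}N_{j}$ is arguably tidier.

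The genuine gap is the non-coprime case, which you only sketch --- and the sketch cannot be completed, because the three-prime coincidence you hope to extract by ``finer denominator bookkeeping'' actually fails when some $N_{n}$ divides $q$. Concretely, take $r=1$, $\rho=\frac{1}{6}$ (so $p=1$, $q=6$), $\mathcal{D}_{1}=\{0,1\}$, $\mathcal{D}_{2}=\{0,1,2\}$, and $N_{n}=2$ for $n\geq 3$. By \eqref{2.1}, $3=6\cdot\frac{1}{2}$ and $9=6\cdot\frac{3}{2}$ are zeros attached to the prime $2$, while $12=6^{2}\cdot\frac{1}{3}$ is a zero attached to the prime $3$; since $3+9=12$, the set $\Lambda=\{0,3,12\}$ satisfies $(\Lambda-\Lambda)\setminus\{0\}\subset\mathcal{Z}(\hat{\mu})$ with non-equal primes (a similar example with $p=5$, $q=6$ works if one insists on $p>1$). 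The failure occurs exactly where your tools go blind: reducing modulo $N_{i}$ only yields $N_{i}\mid N_{j}N_{k}p^{t_{1}}q^{T-t_{1}}$, which is vacuous once $N_{i}\mid q$ and $T>t_{1}$; and your guiding principle that a zero has ``reduced denominator a single prime by \eqref{2.1}'' is false, since $\rho^{-n}$ contributes a power of $p$ to the denominator while the prime $N_{n}$ cancels entirely into $q^{n'}$ whenever $N_{n}\mid q$. The paper sidesteps this by imposing uniform regimes --- $(p,N_{n})=(q,N_{n})=1$, or $N_{n}\mid p$, or $N_{n}\mid q$ for all $n$ --- and arguing integer-versus-fraction within each; note, though, that its Case 3 only refutes the subcase $N_{n_{2}}=N_{n_{s}}\neq N_{n_{1}}$, and the example above lives precisely in the untreated subcase $N_{n_{1}}=N_{n_{s}}\neq N_{n_{2}}$, where $q^{n_{2}'-n_{1}'}/N_{n_{2}}$ is an integer and no contradiction arises. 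So your instinct that this regime is ``the hard part'' is correct, but no bookkeeping can close it: the statement needs additional hypotheses there, and your proposal (like the paper's Case 3) does not deliver a proof of the lemma in that regime.
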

	
	\begin{proof}
		Let $\lambda_{1},\ \lambda_{2}\in\mathcal{Z}(\hat{\mu})\cap\Lambda$. From \eqref{2.1} there exists some $\lambda_{s}=\frac{\rho^{-n_{s}}a_{n_{s}}}{N_{n_{s}}}\in\mathcal{Z}(\hat{\mu})$ such that $\lambda_{2}-\lambda_{1}=\lambda_{s}$, i.e.,
		\begin{equation}\label{3.2}
		\rho^{-n_{2}}\frac{a_{n_{2}}}{N_{n_{2}}}-\rho^{-n_{1}}\frac{a_{n_{1}}}{N_{n_{1}}}=\rho^{-n_{s}}\frac{a_{n_{s}}}{N_{n_{s}}}.
		\end{equation}
		By Lemma \ref{lem3.1} we have $n_{1}\equiv n_{2}\equiv n_{s}\pmod r$. Write $n_{1}=n_{1}^{\prime}r+k,\ n_{2}=n_{2}^{\prime}r+k,\ n_{s}=n_{s}^{\prime}r+k$,\  $n_{1}^{\prime},\ n_{2}^{\prime},\ n_{s}^{\prime}\in\mathbb{Z},\ 0\leq k\leq r-1$. \eqref{3.2} can be expressed by
		\begin{equation*}
		(\frac{q}{p})^{n_{2}^{\prime}}\frac{a_{n_{2}}}{N_{n_{2}}}-(\frac{q}{p})^{n_{1}^{\prime}}\frac{a_{n_{1}}}{N_{n_{1}}}=(\frac{q}{p})^{n_{s}^{\prime}}\frac{a_{n_{s}}}{N_{n_{s}}}.
		\end{equation*}
		Without loss of generality, we assume that $n_{1}^{\prime}<n_{2}^{\prime}$. Then
		\begin{equation}\label{3.3}
		(\frac{q}{p})^{n_{2}^{\prime}-n_{1}^{\prime}}\frac{a_{n_{2}}}{N_{n_{2}}}-\frac{a_{n_{1}}}{N_{n_{1}}}=(\frac{q}{p})^{n_{s}^{\prime}-n_{1}^{\prime}}\frac{a_{n_{s}}}{N_{n_{s}}}.
		\end{equation}

		Suppose $n_{s}^{\prime}>n_{1}^{\prime}$, according to the relation among $p$, $q$ and $N_{n}$, we divide this case into three cases. Then we can prove that $N_{n_{2}},N_{n_{1}},N_{n_{s}}$ must be equal by using reduction to absurdity.
%The first one is that all $p,\ q,\ N_{n}$ are co-prime respectively. The second one is that $(p,N_{n})>1$, $(q,N_{n})=1$ for all $n\in\mathbb{N}$. The third one is that $(q,N_{n})>1$, $(p,N_{n})=1$ for all $n\in\mathbb{N}$.

		\textbf{Case 1}: $(p,N_{n})=1$ and $(q,N_{n})=1$ for all $n\in\mathbb{N}$.
		
		If $N_{n_{1}},\ N_{n_{2}},\ N_{n_{s}}$ are respectively different, \eqref{3.3} implies that
		\begin{equation}\label{3.4}
		p^{n_{s}^{\prime}-n_{1}^{\prime}}q^{n_{2}^{\prime}-n_{1}^{\prime}}a_{n_{2}}N_{n_{1}}-p^{n_{2}^{\prime}+n_{s}^{\prime}-2n_{1}^{\prime}}a_{n_{1}}{N_{n_{2}}}=p^{n_{2}^{\prime}-n_{1}^{\prime}}q^{n_{s}^{\prime}-n_{1}^{\prime}}a_{n_{s}}\frac{N_{n_{2}}N_{n_{1}}}{N_{n_{s}}}.
		\end{equation}
		Obviously, by the assumption, all $p,\ q,\ N_{n}$ are co-prime respectively and $a_{n_{s}}\in\mathbb{Z}\setminus N_{n_{s}}\mathbb{Z}$, hence the left hand of \eqref{3.4} is an integer and the right hand is a fraction. This is a contradiction, so at least two of  $N_{n_{1}},\ N_{n_{2}},\ N_{n_{s}}$ are equal. We can assume $N_{n_{1}}=N_{n_{2}}\neq N_{n_{s}}$, then \eqref{3.3} implies that
		\begin{equation}\label{3.5}
		p^{n_{s}^{\prime}-n_{1}^{\prime}}q^{n_{2}^{\prime}-n_{1}^{\prime}}a_{n_{2}}-p^{n_{2}^{\prime}+n_{s}^{\prime}-2n_{1}^{\prime}}a_{n_{1}}=p^{n_{2}^{\prime}-n_{1}^{\prime}}q^{n_{s}^{\prime}-n_{1}^{\prime}}a_{n_{s}}\frac{N_{n_{2}}}{N_{n_{s}}}.
		\end{equation}
		The same argument as before shows that \eqref{3.5} is a contradiction. Moreover, we can also get contradictions for the cases $N_{n_{1}}=N_{n_{s}}\neq N_{n_{2}}$ and $N_{n_{s}}=N_{n_{2}}\neq N_{n_{1}}$. Hence $N_{n_{1}}=N_{n_{2}}=N_{n_{s}}$.

		\textbf{Case 2}: $(p,N_{n})>1$ and $(q,N_{n})=1$ for all $n\in\mathbb{N}$.

		If $N_{n_{1}},\ N_{n_{2}},\ N_{n_{s}}$ are respectively different, \eqref{3.3} implies that
		\begin{equation}\label{3.6}
		p^{n_{2}^{\prime}-n_{1}^{\prime}}a_{n_{1}}\frac{N_{n_{s}}}{N_{n_{1}}}+p^{n_{2}^{\prime}-n_{s}^{\prime}}q^{n_{s}^{\prime}-n_{1}^{\prime}}a_{n_{s}}=q^{n_{2}^{\prime}-n_{1}^{\prime}}a_{n_{2}}\frac{N_{n_{s}}}{N_{n_{2}}}.
		\end{equation}
		If $n_{2}^{\prime}\geq n_{s}^{\prime}$, the right hand of \eqref{3.6} is a fraction under the assumption $(q,N_{n})=1$ and $(p,N_{n})>1$, contracting that the left is an integer. If $n_{2}^{\prime}<n_{s}^{\prime}$, \eqref{3.6} can be expressed into
		\begin{equation*}
		a_{n_{2}}q^{n_{2}^{\prime}-n_{1}^{\prime}}\frac{p^{n_{s}^{\prime}-n_{2}^{\prime}}}{N_{n_{2}}}-a_{n_{1}}\frac{p^{n_{s}^{\prime}-n_{1}^{\prime}}}{N_{n_{1}}}=q^{n_{s}^{\prime}-n_{1}^{\prime}}\frac{a_{n_{s}}}{N_{n_{s}}}.
		\end{equation*}
		The above is also a contradiction, so at least two of  $N_{n_{1}},\ N_{n_{2}},\ N_{n_{s}}$ are equal. Similarly, suppose $N_{n_{1}}=N_{n_{s}}\neq N_{n_{2}}$, then \eqref{3.3} implies that
		\begin{equation*}
		p^{n_{2}^{\prime}-n_{1}^{\prime}}a_{n_{1}}+p^{n_{2}^{\prime}-n_{s}^{\prime}}q^{n_{s}^{\prime}-n_{1}^{\prime}}a_{n_{s}}=q^{n_{2}^{\prime}-n_{1}^{\prime}}a_{n_{2}}\frac{N_{n_{s}}}{N_{n_{2}}}.
		\end{equation*}
		For the same reason, it is a contradiction. Hence $N_{n_{1}}=N_{n_{2}}=N_{n_{s}}$.

		\textbf{Case 3}: $(p,N_{n})=1$ and $(q,N_{n})>1$ for all $n\in\mathbb{N}$.

		If $N_{n_{1}},\ N_{n_{2}},\ N_{n_{s}}$ are respectively different, \eqref{3.3} implies that
		\begin{equation}\label{3.7}
		q^{n_{2}^{\prime}-n_{1}^{\prime}}a_{n_{2}}-p^{n_{2}^{\prime}-n_{s}^{\prime}}a_{n_{s}}N_{n_{2}}\frac{q^{n_{s}^{\prime}-n_{1}^{\prime}}}{N_{n_{s}}}=p^{n_{2}^{\prime}-n_{1}^{\prime}}a_{n_{1}}\frac{N_{n_{2}}}{N_{n_{1}}}.
		\end{equation}
		In the same way, we can get that the above equation is a contradiction whether $n_{2}^{\prime}\geq n_{s}^{\prime}$. Therefore, at least two of  $N_{n_{1}},\ N_{n_{2}},\ N_{n_{s}}$ are equal, we can assume $N_{n_{2}}=N_{n_{s}}\neq N_{n_{1}}$. Then \eqref{3.7} implies that
		\begin{equation}\label{3.8}
		q^{n_{2}^{\prime}-n_{1}^{\prime}}a_{n_{2}}-p^{n_{2}^{\prime}-n_{s}^{\prime}}a_{n_{s}}q^{n_{s}^{\prime}-n_{1}^{\prime}}=p^{n_{2}^{\prime}-n_{1}^{\prime}}a_{n_{1}}\frac{N_{n_{2}}}{N_{n_{1}}},
		\end{equation}
		obviously, a contradiction. Hence $N_{n_{1}}=N_{n_{2}}=N_{n_{s}}$.

		When $n_{s}^{\prime}\leq n_{1}^{\prime}$, the proof is similar to that of case $n_{s}^{\prime}>n_{1}^{\prime}$, so we omit it here. Therefore, $N_{n_{1}}=N_{n_{2}}=N_{n_{s}}$ as requried.
	\end{proof}

	\section{Proof of Theorem \ref{thm1.1}}
	In the sequel, the whole proof of Theorem \ref{thm1.1} will be divided into the following three parts.

	 We firstly give a necessary condition for the existence of an infinite orthogonal exponential set in $L^{2}(\mu)$, i.e. Theorem \ref{thm1.1} $(\rm i)$.

	\begin{proof}[Proof of Theorem \ref{thm1.1} $(\rm i)$]
		
		Let $E_{\Lambda}$ be an infinite orthonomal set with $0\in\Lambda$, then $\Lambda\setminus\{0\}\subseteq\mathcal{Z}(\hat{\mu})$. For any $\lambda\in\mathcal{Z}(\hat{\mu})$, there exists $k\in\mathbb{N}$, $B_{k}\in\mathbb{Z}\setminus N_{k}\mathbb{Z}$ such that $\lambda=\frac{B_{k}}{\rho^{k}N_{k}}$ by \eqref{2.1}. Furthermore, there exist integers $l_{k}\geq 0$ and $b_{k}\in\mathbb{Z}$ such that $B_{k}=q^{l_{k}}b_{k}$ with $q\nmid b_{k}$. In this way, we have $\frac{B_{k}}{\rho^{k}N_{k}}=\frac{q^{l_{k}}b_{k}}{p^{k}N_{k}}=\frac{p^{l_{k}}b_{k}}{N_{k}\rho^{k+rl_{k}}}$ with $q\nmid b_{k}$ and $N_{k}\nmid q^{l_{k}}b_{k}$.

		Let $k_{1}$ be the smallest positive integer such that $\frac{l_{k_{1}}}{\rho^{k_{1}}N_{k_{1}}}\in(\Lambda-\Lambda)\setminus\{0\}\subset\mathcal{Z}(\hat{\mu})$, where some $l_{k_{1}}\in\mathbb{Z}\setminus q\mathbb{Z}$. By the property of bi-zero set, there are $\lambda_{1},\ \lambda_{0}\in\Lambda$ such that $\lambda_{1}-\lambda_{0}=\frac{l_{k_{1}}}{\rho^{k_{1}}N_{k_{1}}}$. For any $\lambda\in\Lambda$, \eqref{2.1} implies that there must exist $\lambda^{\prime}\in\mathcal{Z}(\hat{\mu})$ such that $\lambda_{1}-\lambda=\lambda^{\prime}$, i.e., $\frac{l_{k_{1}}}{\rho^{k_1}N_{k_{1}}}-(\lambda-\lambda_{0})=\lambda^{\prime}$. Write $\lambda=\lambda_{0}+\frac{p^{l_{k}}b_{k}}{N_{k}\rho^{k+rl_{k}}}$ and $\lambda^{\prime}=\frac{p^{l_{t}}b_{t}}{N_{t}\rho^{t+rl_{t}}}$ with the properties: $b_{k}\in(\mathbb{Z}\setminus p\mathbb{Z})\setminus  N_{k}\mathbb{Z}$, $b_{t}\in(\mathbb{Z}\setminus p\mathbb{Z})\setminus N_{t}\mathbb{Z}$,\ $t,k\in\mathbb{N}$,\ $l_{k},l_{t}\geq 0$. Then we have
		\begin{equation}\label{3.9}
		\frac{l_{k_{1}}}{\rho^{k_{1}}N_{k_{1}}}-\frac{p^{l_{k}}b_{k}}{N_{k}\rho^{k+rl_{k}}}=\frac{p^{l_{t}}b_{t}}{N_{t}\rho^{t+rl_{t}}}.
		\end{equation}
		From Lemma \ref{lem3.2}, $N_{k_{1}}=N_{k}=N_{t}$.
		By Lemma \ref{lem3.1} we have $k-k_{1}\equiv t-k_{1}\equiv 0 \pmod r$. Let $k+rl_{k}=nr+k_{1}$, $t+rl_{t}=ur+k_{1}$, by the definition of $k_{1}$, it is easy to see $n,u\geq 0$. Then \ref{3.9} becomes
		\begin{equation}\label{3.11}
		l_{k_{1}}-p^{l_{k}-n}b_{k}q^{n}=p^{l_{t}-u}b_{t}q^u.
		\end{equation}	
		If $n=0$, then $\lambda=\lambda_{0}+\frac{p^{l_{k}}b_{k}}{N_{k}\rho^{k_{1}}}$. If $n>0$, we claim that $u=0$. Otherwise, if $u>0$, \eqref{3.11} means $l_{k_{1}}=p^{l_{k}-n}b_{k}q^{n}+p^{l_{t}-u}b_{t}q^u$, without loss of generality, we assume that $n>u$, we have $l_{k_{1}}=q^{u}{p^{l_{k}-n}b_{k}q^{n-u}+p^{l_{t}-u}b_{t}}$, then the above equality implies $q|l_{k_{1}}$, a contradiction. Hence $u=0$.
We can rewrite \eqref{3.11} as
		\begin{equation}\label{3.12}
		p^{n}(l_{k_{1}}+p^{l_{t}}b_{t})=p^{l_{k}}{b_{k}}q^{n}.
		\end{equation}
It is easy to see $p^{n}|p^{l_{k}}{b_{k}}$. Let $\ s_{k}\in\mathbb{Z}$ such that $p^{l_{k}}{b_{k}}=s_{k}p^{n}$. Then $$\lambda=\lambda_{0}+\frac{p^{n}s_{k}}{N_{k}\rho^{k_{1}+nr}}=\lambda_{0}+\frac{q^{n}s_{k}}{N_{k}\rho^{k_{1}}}.$$
 Therefore, there exsits non-zero integers $a_{j}$ such that
		\begin{equation*}
		\Lambda=\{\lambda_{0}\}\cup\{{\lambda_{j}\}_{j=1}^{+\infty}},\  \lambda_{j}=\lambda_{0}+\frac{a_{j}}{N_{k}\rho^{k_{1}}}.
		\end{equation*}
		%For all distinct $j_{1},j_{2}>0$, there are $\lambda_{j_{1}}\neq\lambda_{j_{2}}\in\Lambda$ such that $\frac{a_{j_{1}}-a_{j_{2}}}{N_{k}\rho^{k_{1}}}\in\mathcal{Z}(\hat{\mu})$.
		 Choose $s,t>0$ such that $N_{k}|(a_{s}-a_{t})$ and denote $a_{s}-a_{t}=m_{k}N_{k}$,\  $m_{k}\in\mathbb{Z}$. Then \eqref{2.1} implies that there exists $B_{n_{s,t}}\in\mathbb{Z}\setminus N_{n_{s,t}}\mathbb{Z},\ n_{s,t}\in\mathbb{Z}$ such that
		\begin{equation*}
		\frac{m_{k}}{\rho^{k_{1}}}=\lambda_{s}-\lambda_{t}=\frac{B_{n_{s,t}}}{N_{n_{s,t}}\rho^{n_{s,t}}}.
		\end{equation*}
		By Lemma \ref{lem3.1}, $n_{s,t}-k_{1}=ru_{s,t}$ for some integer $u_{s,t}$. Hence
		\begin{equation}\label{3.13}
		(\frac{p}{q})^{u_{s,t}}=\frac{B_{n_{s,t}}}{m_{k}N_{n_{s,t}}}.
		\end{equation}

		The definition of $k_{1}$ implies $u_{s,t}\geq0$. Note that $N_{n_{s,t}}\nmid B_{n_{s,t}}$, we see that $u_{s,t}>0$, so \eqref{3.13} means that $(q,N_{n_{s,t}})>1$. In the same manner, we can find infinite $n$ such that $(q,N_{n})>1$. The proof is complete.
	\end{proof}

	It follows from Theorem \ref{thm1.1} $(\rm i)$ that if $(q,N_{n})=1$, then every orthogonal set of exponential functions in $L^{2}(\mu)$ is finite. One can naturally ask: What is the maximal cardinality of the orthogonal exponential functions in $L^{2}(\mu)$? Next, we can answer this question.

	Let $\rho=\left(\frac{p}{q}\right)^{\frac{1}{r}}$  for some $p,q,r\in\mathbb{N}$. Then for any $\xi\in\mathbb{R}$,
	\begin{equation*}
	\hat{\mu}(\xi)=\prod_{i=1}^{r} \prod_{j=0}^{\infty}M_{\mathcal{D}_{jr+i}}\left(\left(\frac{p}{q}\right)^{j} \rho^{i} \xi\right).
	\end{equation*}
	Let $\hat{\nu}_{i}(\xi)=\prod_{j=0}^{\infty} M_{\mathcal{D}_{jr+i}}\left(\left(\frac{p}{q}\right)^{j} \rho^{i} \xi\right)$ for $1\leq i\leq r$. Therefore $\mu=\nu_{1}\ast \nu_{2}\ast\cdots*\nu_{r}$ and
	\begin{equation}\label{3.15}
	\mathcal{Z}\left(\hat{\mu}\right)=\bigcup_{i=1}^{r} \mathcal{Z}\left(\hat{\nu}_{i}\right),\ \mathcal{Z}\left(\hat{\nu}_{i}\right)=\rho^{-i} \bigcup_{j=0}^{\infty}\left(\frac{q}{p}\right)^{j} \frac{a_{j}}{N_{j}} ,\ a_{j}\in\mathbb{Z}\setminus N_{j}\mathbb{Z}.
	\end{equation}
	Since $N_{n}$ is prime and $(q,N_{n})=1$, \eqref{3.15} becomes
	\begin{equation}\label{3.16}
\mathcal{Z}\left(\hat{\mu}\right)\subset\bigcup_{i=1}^{r} \rho^{-i} \bigcup_{j=0}^{\infty}\frac{b_{j}}{p^{j}N_{j}},\ 1\leq i\leq r,\ b_{j}\in\mathbb{Z}\setminus N_{j}\mathbb{Z}.
	\end{equation}
	
	\begin{lemma}\label{lem3.3}
		Let $\rho=(\frac{p}{q})^{\frac{1}{r}}$ for some $p,q,r\in\mathbb{N}$ with $(p,q)=1$. Suppose that  $(q,N_{n})=1$ for all $n$, then $E_\Lambda$ is an orthogonal set of exponential functions in $L^{2}(\mu)$ if and only if there exists some $i\in\{1,2,...,r\}$ such that $(\Lambda-\Lambda)\backslash\{0\}\subset\mathcal{Z}(\hat{\nu_{i}})$, where $\mathcal{Z}\left(\hat{\nu}_{i}\right)$ is given by \eqref{3.15}.	
	\end{lemma}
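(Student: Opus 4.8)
The plan is to prove the two implications separately, with essentially all the content living in the forward direction. The reverse implication is immediate: since $\mathcal{Z}(\hat{\nu}_{i})\subset\mathcal{Z}(\hat{\mu})$ by \eqref{3.15}, if $(\Lambda-\Lambda)\setminus\{0\}\subset\mathcal{Z}(\hat{\nu}_{i})$ for some $i$, then $(\Lambda-\Lambda)\setminus\{0\}\subset\mathcal{Z}(\hat{\mu})$, so $E_{\Lambda}$ is an orthogonal set by the bi-zero characterization recorded in Section~2. When $r=1$ there is only the single factor $\nu_{1}=\mu$ and the claim is vacuous, so I will assume $r>1$, which is exactly the regime where Lemma \ref{lem3.1} applies.

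For the forward direction, I would first exploit the translation invariance of orthogonality: replacing $\Lambda$ by $\Lambda-\lambda_{0}$ for a fixed $\lambda_{0}\in\Lambda$ multiplies each exponential by a unimodular factor and leaves $\Lambda-\Lambda$ unchanged, so I may assume $0\in\Lambda$ and hence $\Lambda\setminus\{0\}\subset\mathcal{Z}(\hat{\mu})$. By \eqref{2.1} every nonzero $\lambda\in\Lambda$ can be written as $\lambda=\rho^{-n(\lambda)}a/N_{n(\lambda)}$ with $a\in\mathbb{Z}\setminus N_{n(\lambda)}\mathbb{Z}$, and the residue $i(\lambda):=n(\lambda)\bmod r$ records precisely which factor $\mathcal{Z}(\hat{\nu}_{i})$ in \eqref{3.15} contains $\lambda$. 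The entire task is to show that this residue is the \emph{same} for every nonzero element of $\Lambda$.

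The core step is a three-term argument. Given two distinct nonzero $\lambda,\lambda'\in\Lambda$, their difference $\lambda-\lambda'$ again lies in $\mathcal{Z}(\hat{\mu})$. Writing $\lambda,\lambda',\lambda-\lambda'$ in the form of \eqref{2.1} and invoking Lemma \ref{lem3.2}, the three denominators $N_{n}$ coincide, after which the relation $\lambda'+(\lambda-\lambda')=\lambda$ becomes an identity $b_{1}a^{k}+b_{2}a^{j}=b_{3}a^{u}$ in the algebraic number $a=\rho^{-1}$, whose minimal polynomial is $px^{r}-q$ (the roles of $p,q$ in Lemma \ref{lem3.1} being interchangeable under $(p,q)=1$). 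Lemma \ref{lem3.1} then forces $n(\lambda)\equiv n(\lambda')\equiv n(\lambda-\lambda')\pmod r$. In particular $i(\lambda)=i(\lambda')$, so there is a single $i$ with every nonzero $\lambda\in\Lambda$ lying in $\mathcal{Z}(\hat{\nu}_{i})$; moreover the same congruence shows each difference $\lambda-\lambda'$ has residue $i$ and hence lies in $\mathcal{Z}(\hat{\nu}_{i})$. Together with the symmetry of $\mathcal{Z}(\hat{\nu}_{i})$ (the index set $\mathbb{Z}\setminus N_{j}\mathbb{Z}$ in \eqref{3.15} is symmetric, so $-\lambda\in\mathcal{Z}(\hat{\nu}_{i})$ whenever $\lambda$ is), this accounts for all elements of $(\Lambda-\Lambda)\setminus\{0\}$ and yields $(\Lambda-\Lambda)\setminus\{0\}\subset\mathcal{Z}(\hat{\nu}_{i})$.

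I expect the main obstacle to be the bookkeeping that rules out any \emph{mixing} between distinct factors $\nu_{i}$: one must check that not only the elements of $\Lambda$ but also all their pairwise differences fall into a common residue class, and then dispose of the degenerate cases ($\Lambda$ a singleton, or differences of the form $0-\lambda$) using symmetry. This is precisely where the congruence conclusion of Lemma \ref{lem3.1} and the equal-denominator conclusion of Lemma \ref{lem3.2}—both valid under the standing hypothesis $(q,N_{n})=1$—do the essential work; absent the primality and coprimality assumptions the three terms could genuinely sit in different factors and the statement would break down.
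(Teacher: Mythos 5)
Your proposal is correct and takes essentially the same route as the paper: the sufficiency is the same one-line containment $\mathcal{Z}(\hat{\nu}_{i})\subset\mathcal{Z}(\hat{\mu})$, and the necessity is the same three-term argument, applying Lemma \ref{lem3.1} to the relation $\lambda_{2}+(\lambda_{1}-\lambda_{2})=\lambda_{1}$ to force all $\rho$-exponents into one residue class mod $r$ (the paper normalizes by $\rho^{L}$ with $L=\max\{m_{1},m_{2},m^{\prime}\}$ so as to use $a=\rho$, where you instead use $a=\rho^{-1}$ with minimal polynomial $px^{r}-q$, an equivalent cosmetic choice). Your one deviation, invoking Lemma \ref{lem3.2} to equalize the $N_{n}$'s before applying Lemma \ref{lem3.1}, is harmless but redundant: the paper simply clears all three denominators in \eqref{3.17}, since Lemma \ref{lem3.1} only needs nonzero integer coefficients, not equal denominators.
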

	\begin{proof}
		It is easy to see that $E_{\Lambda}$ is an orthogonal set of exponential functions in $L^2(\hat{\mu})$ if  $(\Lambda-\Lambda)\backslash\{0\}\subset\mathcal{Z}(\hat{\nu_{i}})$. Assume that $(\Lambda-\Lambda)\backslash\{0\}\subset\mathcal{Z}(\hat{\nu_{i}})$ for some $i\in\{1,2,...,r\}$. Then the sufficiency follows from \eqref{2.1}.
		
		We now turn to prove the necessity. Assume that $E_{\Lambda}$ is an orthogonal set of exponential functions in $L^2(\hat{\mu})$. Then $(\Lambda-\Lambda)\backslash\{0\}\subset\mathcal{Z}(\hat{\mu})$.
		In fact, by \eqref{3.16}, we can write $\lambda_{1}=\frac{\rho^{-m_{1}}b_{j_{1}}}{p^{j_{1}}N_{j_{1}}}$ and $\lambda_{2}=\frac{\rho^{-m_{2}}b_{j_{2}}}{p^{j_{2}}N_{j_{2}}}$, where $b_{j_{1}}\in\mathbb{Z}\setminus N_{j_{1}}\mathbb{Z},\ b_{j_{2}}\in\mathbb{Z}\setminus N_{j_{2}}\mathbb{Z}$ and $1\leq m_{1},m_{2}\leq r$. Then there exists $\lambda^{\prime}=\frac{\rho^{-m^{\prime}}b_{j^{\prime}}}{p^{j^{\prime}}N_{j^{\prime}}}\in\mathcal{Z}(\hat{\mu})$ with $1\leq m^{\prime}\leq r,\ b_{j^{\prime}}\in\mathbb{Z}\setminus N_{j^{\prime}}\mathbb{Z}$ such that $\lambda_{1}-\lambda_{2}=\lambda^{\prime}$. Let $L=\max\{m_{1},\ m_{2},\ m^{\prime}\}$, then we have
		\begin{equation}\label{3.17}
		\rho^{-m_{1}}b_{j_{1}}p^{j_{2}+j_{3}}N_{j_{2}}N_{j_{3}}-\rho^{-m_{2}}b_{j_{2}}p^{j_{1}+j_{3}}N_{j_{1}}N_{j_{3}}=\rho^{-m_{3}}b_{j_{3}}p^{j_{1}+j_{2}}N_{j_{1}}N_{j_{2}}.
		\end{equation}
		If $r>1$, by Lemma \ref{lem3.1} we have $L-m_{1}\equiv L-m_{2}\equiv L-m^{\prime}\pmod r$ and thus $m_{1}\equiv m_{2}\equiv m^{\prime}\pmod r$. Together with $1\leq m_{1},\ m_{2},\ m^{\prime}\leq r$, we have $m_{1}=m_{2}=m^{\prime}$. If $r=1$, it is trivial. Therefore, $\lambda_{1},\ \lambda_{2},\ \lambda_{1}-\lambda_{2}$ belong to the same $\mathcal{Z}(\hat{\nu}_{i})$ for some $1\leq i\leq r$. Hence the proof is complete.
	\end{proof}
	
With the help of above lemma, we can now prove  Theorem \ref{thm1.1} $(\rm ii)$.
	\begin{proof}[Proof of Theorem \ref{thm1.1} $(\rm ii)$]
		We first prove that there are at most $M$ mutually orthogonal exponential functions in $L^{2}(\mu)$.
		Suppose on the contrary that ${\rm{\# }}\Lambda\geq M+1$. Let $\Lambda=\{0,\lambda_{1},\lambda_{2},...,\lambda_{M}\}$ be an orthogonal set for $\mu$, where $\lambda_{j}=\frac{\rho^{-i}b_{l_{j}}}{p^{l_{j}}N_{l_{j}}},\ 1\leq i\leq r,\ 1\leq j\leq M,\ b_{l_{j}}\in \mathbb{Z}\setminus N_{l_{j}}\mathbb{Z}$. According to Lemma \ref{lem3.2}, all $\{N_{l_{j}},\ 1\leq j\leq M\}$ are equal and write it as $N$. Let $m=\max\{l_{j},\ 1\leq j\leq M\}$, then
		\begin{equation*}
		\Lambda\setminus\{0\}=\frac{\rho^{-i}}{p^{m}N}\{p^{m-l_{j}}b_{l_{j}},\ 1\leq j\leq M, b_{l_{j}}\in \mathbb{Z}\setminus N\mathbb{Z}\}:=\frac{\rho^{-i}}{p^{m}N}\Lambda^{\prime}.
		\end{equation*}
		
		We first consider the case $N=M$. Since $(p,N_{n})=1,\ (q,N_{n})=1$ and $M$ is a prime number, we have
		$
		M\nmid p^{m-l_{j}}b_{l_{j}},
		$
		that is
		$
		p^{m-l_{j}}b_{l_{j}}\neq0\pmod M.
		$
		Together with $\#\Lambda^{\prime}=M$, we see that there exists at least two different $j_{1},\ j_{2}\in\{1,2,\ldots,M\}$, such that
		\begin{equation*}
		p^{m-l_{j_{1}}}b_{l_{j_{1}}}\equiv p^{m-l_{j_{2}}}b_{l_{j_{2}}}\pmod M.
		\end{equation*}
		By lemma \ref{lem3.2}, we have
		\begin{equation*}	
		\lambda_{j_{1}}-\lambda_{j_{2}}=\frac{\rho^{-i}}{Np^{m}}(b_{l_{j_{1}}}p^{m-l_{j_{1}}}-b_{l_{j_{2}}}p^{m-l_{j_{2}}})=\frac{\rho^{-i}kM}{p^{m}N}\notin\mathcal{Z}(\hat{\nu}_{i}),
		\end{equation*}
		where $k\in\mathbb{Z}\setminus\{0\}$. In fact, if there  exists some $\lambda=\frac{\rho^{-i}a_{j}}{p^{j}N}\in\mathcal{Z}(\hat{\nu}_{i})$  such that $\lambda_{j_{1}}-\lambda_{j_{2}}=\lambda$, then we have
		\begin{equation*}
		\frac{\rho^{-i}kM}{p^{m}N}=\frac{\rho^{-i}a_{j}}{p^{j}N},\ \text{i.e.}, \ p^{j}kM=a_{j}p^{m}.
		\end{equation*}
		 If $m\geq j$, we have $a_{j}p^{m-j}=kM$, which contradicts $M\nmid a_{j}$; if $m>j$, we have $p^{j-m}kM=a_{j}$, which also contradicts $M\nmid a_{j}$.

		We now turn to the case $N<M$. Similar to the above,  we have
		$
		N\nmid p^{m-l_{j}}b_{l_{j}},
		$
		that is
		$
		p^{m-l_{j}}b_{l_{j}}\neq0\pmod N.
		$
		Together with $\#\Lambda^{\prime}=M\geq N+1$, we see that there exist at least two different $j_{1},j_{2}\in\{1,2,...,M\}$, such that
		\begin{equation*}
		p^{m-l_{j_{1}}}b_{l_{j_{1}}}\equiv p^{m-l_{j_{2}}}b_{l_{j_{2}}}\pmod N.
		\end{equation*}
		By lemma \ref{lem3.3}, we have
		\begin{equation*}	
		\lambda_{j_{1}}-\lambda_{j_{2}}=\frac{\rho^{-i}}{Np^{m}}(b_{l_{j_{1}}}p^{m-l_{j_{1}}}-b_{l_{j_{2}}}p^{m-l_{j_{2}}})=\frac{\rho^{-i}kN}{p^{m}N}\notin\mathcal{Z}(\hat{\nu}_{i}),
		\end{equation*}
		where $k\in\mathbb{Z}\setminus\{0\}$. In fact, if there  exists $\lambda=\frac{\rho^{-i}a_{j}}{p^{j}N}\in\mathcal{Z}(\hat{\nu}_{i})$  such that $\lambda_{j_{1}}-\lambda_{j_{2}}=\lambda$, then we have
		\begin{equation*}
		\frac{\rho^{-i}kN}{p^{m}N}=\frac{\rho^{-i}a_{j}}{p^{j}N},\ \text{i.e.,} \ p^{j}kN=a_{j}p^{m}.
		\end{equation*}
		 If $m\geq j$, we have $a_{j}p^{m-j}=kN$, which contradicts $N\nmid a_{j}$; if $m>j$, we have $p^{j-m}kN=a_{j}$, contradicting $N\nmid a_{j}$.

		Hence, there are at most $M$ mutually orthogonal exponential functions in $L^{2}(\mu)$.

		In the following, we prove that $M$ is the best possible by constructing an orthogonal set of exponential functions with cardinality equal to $M$. let
		\begin{equation*}
		\Lambda_{0}=\{0\}\cup\{\frac{j}{\rho^{t}N_{t}}:1\leq j\leq M-1\}
		\end{equation*}
		for some $t$ such that $N_{t}=M=\sup\{N_{n}:n=1,2,3,\ldots\}<\infty$. It is easy to check that $(\Lambda_{0}-\Lambda_{0})\setminus\{0\}\subset\mathcal{Z}(\hat\nu_{i})$. Thus, $E_{\Lambda_{0}}$ is an orthogonal set of exponential functions in $L^{2}(\mu)$ by Lemma \ref{lem3.3}. Hence $M$ is the best possible.
	\end{proof}

We proceed to show there are any number of orthonomal exponential functions in $L^{2}(\mu)$ under the assumption that $(q,N_{n})=1$ and $N_{n}|p$.
%We prove the conclusion by constructing an orthogonal set of exponential functions in $L^{2}(\mu)$ with cardinality equal to $N$, where $N$ is any given positive integer.
We firstly introduce multiplicative order \cite{Z} that will be used in our proof.
	\begin{de}(\cite{Z})
		Let $n>0$, $(a,n)=1$ and let $s$ be the smallest positive integer such that $a^s\equiv 1\pmod n$, then $s$ is called the multiplicative order of $a\pmod n$.
	\end{de}
	
	\begin{proof}[Proof of Theorem \ref{thm1.1} $(\rm iii)$]
		Under the assumptions of Theorem \ref{thm1.1}, fix $1\leq i\leq r$, for any given positive integer $\alpha$, let
		\begin{equation*}
		\Lambda^{*}=\{\lambda_{n}=\frac{q^{(\alpha+n)s}}{N_{n}}(\frac{q}{p})^{ns+\frac{i}{r}}:1\leq n\leq \alpha\}\subset\mathcal{Z}(\hat{\nu}_{i}),
		\end{equation*}
		where $s$ is the multiplicative order of $q\pmod {N_{n}}$, and $\mathcal{Z}(\hat{\nu}_{i})$ is defined as \eqref{3.15}.

		Without loss of generality, we assume that $i=1$. For any integers $n_{1}, n_{2}$ with $1\leq n_{2}<n_{1}\leq \alpha$, we have
		\begin{align*}
		\lambda_{n_{1}}-\lambda_{n_{2}}&=\frac{q^{(\alpha+n_{1})s}}{N_{n_{1}}}(\frac{q}{p})^{n_{1}s+\frac{1}{r}}-\frac{q^{(\alpha+n_{2})s}}{N_{n_{2}}}(\frac{q}{p})^{n_{2}s+\frac{1}{r}}\\
		&=\frac{1}{N_{n_{1}}}(\frac{q}{p})^{n_{1}s+\frac{1}{r}}(q^{(\alpha+n_{1})s}-q^{(\alpha+n_{2})s}(\frac{p}{q})^{(n_{1}-n_{2})s})    (\rm{by\;Lemma}\;\ref{lem3.2})\\
		&=\frac{1}{N_{n_{1}}}(\frac{q}{p})^{n_{1}s+\frac{1}{r}}(q^{(\alpha+n_{1})s}-q^{(\alpha+2n_{2}-n_{1})s}p^{(n_{1}-n_{2})s}).\\
		\end{align*}
		In order to prove that $\lambda_{n_{1}}-\lambda_{n_{2}}\in\mathcal{Z}(\hat{\nu}_{1})$, we only need to prove that
		\begin{equation}\label{3.18}
		(q^{(\alpha+n_{1})s}-q^{(\alpha+2n_{2}-n_{1})s})p^{(n_{1}-n_{2})s}\ne 0\pmod {N_{n_{1}}}.
		\end{equation}	
		Due to $q^{s}\pmod {N_{n_{1}}}=1$, then
		\begin{equation}\label{3.19}
		q^{ks}\pmod{N_{n_{1}}}=1
		\end{equation}
		holds for any positive integer $k$.

		We cliam that for any two positive integers $a,b$,
		\begin{equation}\label{3.20}
		q^{as}p^{b}\equiv p^{b}\ne1\pmod {N_{n_{1}}}.
		\end{equation}
		On the contrary, we assume that $p^{b}\pmod {N_{n_{1}}}=1$, then there must exist some integer $l$, such that $p^{b}=lN_{n_{1}}+1$. Since $(p,N_{n_{1}})=d>1$, i.e. $d|p$ and $d|N_{n_{1}}$, we have $d|1$, which contradicts the fact that $d>1$. Therefore the claim follows.

		From \eqref{3.19} and \eqref{3.20}, we obtain that \eqref{3.18} holds, hence
		\begin{equation*}
		\lambda_{n_{1}}-\lambda_{n_{2}}\in\mathcal{Z}(\hat{\nu}_{1}).
		\end{equation*}
		Since $n_{1}$ and $n_{2}$ are arbitrary, we obtain that
		$\{\Lambda^{*}-\Lambda^{*}\}\setminus{0}\subset\mathcal{Z}(\hat{\nu}_{i})$. Obviously, from Lemma \ref{lem3.3}, we see that $E_{\Lambda^{*}}$ is an orthogonal set in $L^{2}(\mu)$ with $\#E_{\Lambda^{*}}=N$. By the arbitrariness of $N$, the proof is complete.
	\end{proof}

	\label{reference}


\begin{thebibliography}{1}
		\smallskip

\bibitem{AFL19}
L.X. An, X.Y. Fu, C.K. Lai, On spectral Cantor-Moran measures and a variant of Bourgain's sum of sine problem. Adv. Math. 349 (2019), 84-124.
		
\bibitem{AH14} L.X. An, X.G. He, A class of spectral Moran measures. J. Funct. Anal. 266 (2014), no. 1, 343-354.

\bibitem{AHL15} L.X. An, X.G. He, K.S. Lau, Spectrality of a class of infinite convolutions. Adv. Math. 283 (2015), 362-376.
		
		\bibitem{D1} X.R. Dai, When does a Bernoulli convolution admit a spectrum? Adv. Math. 231 (2012), 1681-1693.
		
		\bibitem{DHL} X.R. Dai, X.G. He, C.K. Lai, On spectral $N$-Bernoulli measures. Adv. Math. 259 (2014), 511-531.

		
		\bibitem{D2} Q.R. Deng, Spectrality of one dimensional self-similar measures with consecutive digits. J. Math.Anal. Appl. 409 (2014), 331-346.	
		
		\bibitem{DL} Q.R. Deng, M.T. Li, Spectrality of Moran-type self-similar measures on $\mathbb{R}$. J. Math. Anal. Appl. 506 (2022), no. 1, Paper No. 125547, 9 pp.

\bibitem{DHL19}
D.E. Dutkay, J. Haussermanu, C.K. Lai, Hadamard triples generate self-affine spectral measures. Trans. Amer. Math. Soc. 371 (2019), no. 2, 1439-1481.
		
		\bibitem{F} B. Fuglede, Commuting self-adjoint partial differential operators and a group theoretic problem. J. Funct. Anal. 16 (1974), 101-121.	
		
		\bibitem{HL} T.Y. Hu, K.S. Lau, Spectral property of the Bernoulli convolutions. Adv. Math. 219 (2008), 554-567.

\bibitem{Hut} J. Hutchinson, Fractals and self-similarity. Indiana Univ. Math. J. 30 (1981), 731-747.	
		
		\bibitem{JP} P. Jorgenson, S. Pederson, Dense analytic subspaces in fractal $L^2$-spaces, J. Anal. Math. 75 (1998), 185-228.	
		
		\bibitem{KM} M.N. Kolountzakis, M. Matolcsi, Complex Hadamard matrices and the spectral set conjecture, Collect. Math. Extra (2006), 281-291.
		
		\bibitem{Z} Z. Ke, Q. Sun, Lectures of the theory of numbers. Higher Education Press, Beijing, 2003.
		
%		\bibitem{LD} Z.S. Liu, X.H. Dong, Spectrality of Moran measures with finite arithmetic digit sets. Internat. J. Math. 31 (2020), no. 2, 2050008, 14 pp.

\bibitem{LDL22}
Z.Y. Lu, X.H. Dong, Z.S. Liu, Spectrality of Sierpinski-type self-affine measures. J. Funct. Anal. 282 (2022), no. 3, 109310.
		
%		\bibitem{LJL1} J.L. Li, Non-spectral problem for a class of planar self-affine measures. J. Funct. Anal. 255 (2008), 3125-3148.
%		
%		\bibitem{LJL2} J.L. Li, Non-spectrality of planar self-affine measures with three-element digit set. J. Funct. Anal. 257 (2009), 537-552.
		
%		\bibitem{LDZ} Z.Y. Lu, X.H. Dong, P.F Zhang, Non-spectrality of self-affine measures on the three-dimensional Sierpinski gasket. Forum Math. 31 (2019), no. 6, 1447-1455.
		
		\bibitem{LW} Q. Li, Z.Y. Wu, Non-spectral problem on infinite Bernoulli convolution. Anal. Math. 47 (2021), no. 2, 343-355.
		
		\bibitem{SR} R. Strichartz, Mock Fourier series and transforms associated with certain Cantor measures. J. Anal. Math. 81 (2000), 209-238.	
		
		\bibitem{T} T. Tao, Fuglede's conjecture is false in 5 and higher dimensions. Math. Res. Lett. 11 (2004), 251-258.
		
		\bibitem{WDJ} Y. Wang, X.H. Dong, Y.P. Jiang,  Non-spectral problem for some self-similar measures. Canad. Math. Bull. 63 (2020), 318-327.
		
		\bibitem{WWDZ} Z.Y. Wang, Z.M. Wang, X.H. Dong, P.F. Zhang, Orthogonal exponential functions of self-similar measures with consecutive digits in $\mathbb{R}$. J. Math. Anal. Appl. 467 (2018), 1148-1152.
		
		\bibitem{YZ} F.L. Yin, M. Zhu, The existence of Fourier basis for some Moran measures. J. Math. Anal. Appl. 459 (2018), no. 1, 590-603.	
		
		
	\end{thebibliography}
\end{document}